\newcommand{\skipitems}[1]{%
  \addtocounter{\@enumctr}{#1}%
}
\definecolor{brickred}{rgb}{0.8, 0.25, 0.33}
\newcommand{\Q}{{\mathbb Q}}
\newcommand{\Vz}{V_\zeta}
\newcommand{\Vb}{{V_{\overline{\zeta}}}}
\newcommand{\eig}{{\mathrm{eig}}}
\newcommand{\Eig}{{\mathrm{Eig}}}
\newcommand{\bbQ}{{\mathbb Q}}
\newcommand{\V}[2]{V_{(#1,#2)}}
\newcommand{\Ver}{V_{(5)}}
\newcommand{\mH}{{\mathrm H}}
\newcommand{\kg}{{\mathfrak g}}
\newcommand{\Aut}{\operatorname{Aut}}
\newcommand{\ord}{{\mathrm{ord}}}
\DeclareMathOperator{\Sym}{Sym}
\theoremstyle{plain}
\newtheorem{satz}{Satz}[section]
\newtheorem{theorem}[satz]{Theorem}
\newtheorem{definition}[satz]{Definition}
\newtheorem{lemma}[satz]{Lemma}
\newtheorem{corollary}[satz]{Corollary}
\newtheorem{proposition}[satz]{Proposition}
\theoremstyle{remark}
\newtheorem{remark}[subsection]{Remark}
\subjclass[2020]{%
14J42 
(%
14J50
)}
\keywords{Hyper-Kähler manifolds, non-symplectic automorphism, fixed point free automorphisms, Enriques varieties}
\thanks{}
\title{Non-existence of Enriques manifolds from OG10 type manifolds}
\author[S.Billi]{Simone Billi}
\address[Simone Billi]{Università di Genova, Dipartimento di Matematica, Via Dodecaneso, 35, 16146 Genova, Italy}
\email{simone.billi@edu.unige.it}
\author[F.Giovenzana]{Franco Giovenzana}
\address[Franco Giovenzana]{Université Paris-Saclay, CNRS, Laboratoire de Mathématiques d'Orsay, Rue Michel Magat, Bât. 307, 91405 Orsay, France}
\email{franco.giovenzana@universite-paris-saclay.fr}
\author[L. Giovenzana]{Luca Giovenzana}
\address[Luca Giovenzana]{Department of Pure Mathematics\\ University of Sheffield\\ Hicks Building, Hounsfield Road\\ Sheffield, S3 7RH\\ UK}
\email{l.giovenzana@sheffield.ac.uk}
\author[A.Grossi]{Annalisa Grossi}
\address[Annalisa Grossi]{Alma Mater studiorum Università di Bologna Dipartimento di Matematica,
Piazza di Porta San Donato 5, Bologna, 40126 Italia}
\email{annalisa.grossi3@unibo.it}
\thanks{\tiny{S.B. was partially supported by the Curiosity Driven 2021 Project "Varieties with trivial or
negative canonical bundle and the birational geometry of moduli spaces of curves: a
constructive approach" - Programma nazionale per la Ricerca (PNR) DM 737/2021.\\
F.G. was funded by Deutsche Forschungsgemeinschaft
(DFG, German Research Foundation), Projektnummer 509501007, and partially supported by the European Research Council (ERC) under the
European Union’s Horizon 2020 research and innovation programme (ERC-2020-SyG-854361-
HyperK).\\
A.G. was partially supported by the European Research Council (ERC) under the European Union’s Horizon 2020 research and innovation programme (ERC-2020-SyG-854361-HyperK).\\
S.B. and A.G. were partially supported by the European Union - NextGenerationEU under the National Recovery and Resilience Plan (PNRR) - Mission 4 Education and research - Component 2 From research to business - Investment 1.1 Notice Prin 2022 - DD N. 104 del 2/2/2022, from title \lq\lq Symplectic varieties: their interplay with Fano manifolds and derived categories\rq\rq, proposal code 2022PEKYBJ – CUP J53D23003840006.\\
All authors are members of the INdAM group GNSAGA, and S.B. was partially supported by GNSAGA}}
\begin{document}
\begin{abstract}
We use the LLV algebra to describe the action of a finite order automorphism on the total cohomology of a manifold of OG10 type.
As an application, we prove that no Enriques manifolds arise as étale quotients of hyper-Kähler manifolds of OG10 type. This answers a question raised in \cite{Pacienza_Sarti}.
\end{abstract}

\maketitle

\section{Introduction}

The celebrated Beauville–Bogomolov decomposition theorem states that any compact complex Kähler manifold with a numerically trivial canonical bundle decomposes, after a finite étale morphism, into a product of three types of varieties: hyper-Kähler manifolds, strict Calabi–Yau manifolds, and abelian varieties. These three building blocks have been the subject of extensive study over the past decades, with hyper-Kähler manifolds (discovered in the 1980s) attracting particular attention due to their rich geometry and connection to various fields of mathematics.

Recently, attention has shifted towards understanding complex Kähler manifolds with numerically trivial canonical bundle in greater generality. In this context, Enriques manifolds have been introduced independently by \cite{BN-WS} and \cite{OS} with slightly different definitions.
For the result in the current paper, we adopt the definition given in \cite{OS}. This excludes the case where the universal cover is Calabi-Yau, which was allowed in the definition of \cite{Boissiere.NieperWisskirchen.Sarti:enriques.varieties}.
\begin{definition}\label{def_enr_man}
An Enriques manifold is a connected complex manifold that is not simply connected and whose universal cover is a hyper-Kähler manifold.
\end{definition}
Various results related to these manifolds have been established: 
the Torelli Theorem \cite{OS_periods, raman2024}, the termination of flips \cite{denisi2024mmp}, and the Kawamata-Morrison cone conjecture for all Enriques manifolds of prime index \cite{Pacienza_Sarti}. Also, singular counterparts have been introduced and analyzed \cite{boissiere2024logarithmic, denisi2024mmp}.

Unlike the two-dimensional case, where Enriques surfaces are well-understood and have a degree-2 universal cover, higher dimensional Enriques manifolds exhibit greater complexity. Examples of index \(2\), \(3\) and \(4\) are constructed starting from a hyper-Kähler manifold of K3\(^{[n]}\)-type or of \(K_n(A)\)-type \cite{BN-WS,OS}. 
 Nevertheless, to date, these are all the degrees for which examples of Enriques manifolds are known. It is then crucial to produce more examples of Enriques manifolds or to demonstrate that certain deformation types of hyper-Kähler manifolds cannot cover Enriques manifolds.

In this paper, we address the latter goal by proving that no Enriques manifold can arise as an étale quotient of a hyper-Kähler manifold deformation equivalent to O’Grady’s ten-dimensional example, answering a question of \cite{Pacienza_Sarti}.

\subsection{Results}
The work of Verbitsky \cite{Ver90,Ver95,Ver96} and Looijenga--Lunts \cite{LL97} showed that the cohomology $H^*(X,\mathbb Q)$ of a hyper-Kähler manifold $X$ admits a natural action of the Lie algebra $\mathfrak g=\mathfrak{so}(4,b_2(X)-2)$ generalizing the action of $\mathfrak sl(2)$ given by the hard Lefschetz Theorem. The algebra $\mathfrak g$ can be identified with the special orthogonal algebra of the Mukai completion $V=\mH^2(X,\bbQ)\oplus U$ where $U$ is the hyperbolic plane.

The decomposition in $\kg$-representations for the known deformation families of hyper-Kähler manifolds is computed in \cite{GKLR}. If $X$ is a hyper-Kähler manifold of OG10 type, then
\begin{equation}\label{tot-LLV-dec}
    H^*(X, \mathbb Q) \simeq V_{(5)}\oplus V_{(2,2)}
    \end{equation}
    where $V_{(5)}$ is the Verbitsky component, which is isomorphic to the largest submodule of $\Sym^5 V$, and $V_{(2,2)}$ is the largest irreducible submodule of $ \Sym^2(\Lambda^2V)$.

 Given a finite order automorphism $\varphi$ of \(X\) we define an isometry $\gamma$ on the Mukai completion $V = H^2(X)\oplus U$ by setting
 \begin{align*}
     \gamma|_{H^2} = \varphi^*|_{H^2}\qquad \mbox{ and  }\qquad \gamma|_U = \mathrm{id}_U.
 \end{align*}
Any such $\gamma$ naturally induces an automorphism $\gamma_{\rho}$ on any $\kg$--representation $V_\rho$.

As a first result we determine, with an ambiguity for an even order automorphism, the dimension of the $\varphi^*$-invariant part of $H^*(X,\mathbb Q)$ in terms of the invariant part on $H^2(X,\mathbb Q)$.
If $\varphi$ is a linear automorphism  on a vector space $W$, we denote by $\Eig(\varphi, W, \lambda)$ the eigenspace relative to the eigenvalue $\lambda$ and by $\eig(\varphi, W, \lambda)$ its dimension. 
\begin{theorem}\label{MainThm}
Let \(X\) be an irreducible holomorphic symplectic manifold of OG10 type, and let \(\varphi \in \Aut(X)\) be an automorphism of finite order, then the following 
relations hold.
\begin{itemize}
    \item If \(\ord(\varphi)\) is even, then one of the following equalities hold true:

\begin{enumerate}[label=\alph*)]
  \item \(\eig(\varphi^*, H^*(X, \mathbb{Q}), 1) = \eig(\gamma_{(5)},\Ver, 1) + \eig(\gamma_{(2,2)}, \V22, 1)\);
 \item \(\eig(\varphi^*, H^*(X, \mathbb{Q}), 1) = \eig(\gamma_{(5)}, \Ver, 1) - \eig(\gamma_{(2,2)}, \V22, 1)\).
    \end{enumerate}

\item If \(\ord(\varphi)\) is odd, then:

 \begin{enumerate}[label=\alph*)]
         \skipitems{2}
        \item \(\eig(\varphi^*, H^*(X, \mathbb{Q}), 1) = \eig(\gamma_{(5)}, \Ver, 1) + \eig(\gamma_{(2,2)}, \V22, 1)\).
    \end{enumerate}
\end{itemize}
Moreover, if $\ord (\varphi)\in \{2,3\}$, then \(\eig(\varphi^*, H^*(X, \mathbb{Q}), 1)\) is a polynomial in the variable $\eig(\varphi^*, H^2(X, \mathbb{Q}), 1)$ appearing in Table~\ref{table_pols}.

\end{theorem}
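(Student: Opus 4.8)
The plan is to reduce the computation of $\eig(\varphi^*, H^*(X,\mathbb Q),1)$ to a computation purely inside the two $\kg$-representations $\Ver$ and $\V22$, and then to evaluate the relevant eigenspace dimensions as explicit functions of $m := \eig(\varphi^*, H^2(X,\mathbb Q),1)$. The starting point is the LLV decomposition \eqref{tot-LLV-dec}: since $\varphi^*$ acts on $H^*(X,\mathbb Q)$ preserving each isotypic component, and since the induced isometry $\gamma$ of the Mukai completion $V$ satisfies $\gamma|_{H^2} = \varphi^*|_{H^2}$ and $\gamma|_U = \mathrm{id}_U$, one has to compare $\varphi^*$ with $\gamma_{(5)} \oplus \gamma_{(2,2)}$. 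On the Verbitsky component these two actions agree up to a sign ambiguity coming from the choice of square root of the determinant (the classical subtlety that $\varphi^*$ and $\gamma$ may differ by $\pm\mathrm{id}$ on odd-degree pieces), which accounts for the trichotomy a)/b)/c) in the first part of the theorem; the odd-order case is rigid because an odd-order automorphism cannot act by $-1$ on a one-dimensional space. First I would make this comparison precise, citing the earlier part of the excerpt, so that $\eig(\varphi^*, H^*(X,\mathbb Q),1)$ is expressed as $\eig(\gamma_{(5)},\Ver,1) \pm \eig(\gamma_{(2,2)},\V22,1)$.

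The heart of the matter is then to compute $\eig(\gamma_{(5)},\Ver,1)$ and $\eig(\gamma_{(2,2)},\V22,1)$ in terms of the eigenvalue data of $\gamma$ on $V$ — equivalently, of $\varphi^*$ on $H^2(X,\mathbb Q)$. For $\ord(\varphi)\in\{2,3\}$ the eigenvalues of $\gamma$ on $V_{\mathbb C}$ are roots of unity of order dividing $\ord(\varphi)$, so the full eigenvalue multiplicity vector is determined by the single integer $m$ together with $b_2(X) = 24$ and $\dim V = 26$: in the order-$2$ case the multiplicities are $m+2$ (for $+1$) and $24-m$ (for $-1$) — the $+2$ because $\gamma$ fixes $U$ — while in the order-$3$ case the non-real eigenvalues come in conjugate pairs of equal multiplicity $(24-m)/2$ and $+1$ has multiplicity $m+2$. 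I would then use the description of $\Ver$ as the largest submodule of $\Sym^5 V$ and of $\V22$ as the largest irreducible submodule of $\Sym^2(\Lambda^2 V)$ to extract, via standard plethysm/character computations for $\mathfrak{so}(V)$, the multiplicity of the eigenvalue $1$ of $\gamma_{(5)}$ and $\gamma_{(2,2)}$. Concretely, $\dim\Eig(\gamma_\rho,V_\rho,1)$ is obtained by averaging the character $\chi_\rho$ of $V_\rho$ over the cyclic group generated by $\gamma$, i.e. $\frac{1}{\ord(\varphi)}\sum_{k}\chi_\rho(\gamma^k)$, and each $\chi_\rho(\gamma^k)$ is a polynomial in the eigenvalue multiplicities of $\gamma^k$, hence ultimately a polynomial in $m$ once the $\ord(\varphi)\in\{2,3\}$ constraint pins down all multiplicities.

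Carrying this out: first express $\chi_{(5)}$ and $\chi_{(2,2)}$ in terms of the symmetric/exterior power characters of the standard representation $V$ by subtracting off the lower pieces (for $\Ver$, $\Sym^5 V = V_{(5)} \oplus (\text{image of contraction with the form})$, so $\chi_{(5)} = \chi_{\Sym^5 V} - \chi_{\Sym^3 V}$; for $\V22$ one peels the irreducible constituents of $\Sym^2(\Lambda^2 V)$ off one at a time using the known $\mathfrak{so}$-decomposition). Then substitute the eigenvalues of $\gamma^k$ — which are $1$ with multiplicity $m+2$ and $-1$ with multiplicity $24-m$ when $\ord(\varphi)=2$, and the corresponding cube-root data when $\ord(\varphi)=3$ — into Newton's-identity-style formulas for $\operatorname{tr}(\Sym^j)$ and $\operatorname{tr}(\Lambda^j)$, average over $k$, and collect the result as a polynomial in $m$. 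I expect the main obstacle to be purely bookkeeping: keeping the plethysm decomposition of $\Sym^2(\Lambda^2 V)$ into $\mathfrak{so}(26)$-irreducibles correct and tracking which constituent is $V_{(2,2)}$, together with correctly handling the order-$2$ sign ambiguity so that the polynomial recorded in Table~\ref{table_pols} is the one realized by an actual automorphism (as opposed to the abstract isometry $\gamma$). Once the characters are in hand, producing the table entries is a finite, if lengthy, computation — easily done by hand or machine — and matching against known examples of non-symplectic involutions and order-$3$ automorphisms of OG10-type manifolds serves as a consistency check.
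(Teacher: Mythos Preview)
Your overall strategy---split along the LLV decomposition, compare $\varphi^*$ with the action induced by $\gamma$, then compute the fixed-space dimensions by plethysm/character averaging---is the same as the paper's, and your proposed computations for $\Ver$ and $\V22$ (via $\chi_{\Sym^5 V}-\chi_{\Sym^3 V}$, respectively via the decomposition of $\Sym^2\Lambda^2 V$) are correct and essentially what the paper does in its auxiliary lemma.

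The genuine gap is your identification of the source of the $\pm$ sign. You place the ambiguity on the Verbitsky component and attribute it to ``odd-degree pieces'' and a square-root-of-determinant issue. Neither is relevant here: OG10 manifolds have $H^{\mathrm{odd}}(X,\Q)=0$, and on the Verbitsky component there is \emph{no} ambiguity at all. Indeed, $A$ is the image of $\Sym^*H^2(X,\Q)\to H^*(X,\Q)$, so $\varphi^*|_A$ is completely determined by $\varphi^*|_{H^2}$ and by $\varphi^*|_{H^0}=\mathrm{id}$; since $\gamma$ agrees with $\varphi^*$ on $H^2$ and is the identity on the extra hyperbolic plane $U$, one gets $\varphi^*|_A=\gamma_{(5)}$ on the nose.

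The $\pm$ arises instead on the $\V22$ component, and for a different reason. The isomorphism $B\xrightarrow{\sim}\V22$ of $\kg$-modules is only canonical up to a nonzero scalar (Schur), so while both the transported action $\psi:=f\circ\varphi^*|_B\circ f^{-1}$ and the induced action $\gamma_{(2,2)}$ satisfy the same twisted equivariance $\psi(L_\alpha x)=L_{\varphi^*\alpha}\psi(x)$, one can only conclude $\psi\circ\gamma_{(2,2)}^{-1}$ is a $\kg$-endomorphism of $\V22$, hence $\psi=\lambda\,\gamma_{(2,2)}$ for some $\lambda\in\Q^*$. From $\psi^n=\mathrm{id}$ one gets $\lambda^n=1$, whence $\lambda=1$ if $n$ is odd and $\lambda=\pm 1$ if $n$ is even. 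This is the paper's Proposition on the compatibility between $\varphi^*|_B$ and $\gamma_{(2,2)}$, and it is the step your proposal is missing. Once you replace your ``odd-degree'' explanation with this Schur-lemma argument, the rest of your plan goes through.
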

A straightforward application of Theorem \ref{MainThm} gives the main result of the paper.
\begin{corollary}\label{cor:no-Enriques}
    There is no Enriques manifold whose universal covering space is a hyper-Kähler manifold of OG10 type.
\end{corollary}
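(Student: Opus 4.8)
The plan is to deduce Corollary~\ref{cor:no-Enriques} from Theorem~\ref{MainThm} by a counting argument on invariant cohomology. Suppose for contradiction that an Enriques manifold $Y$ has universal cover a hyper-Kähler manifold $X$ of OG10 type, with $\pi\colon X\to Y$ étale of degree $d>1$ and deck group $G\cong\bbZ/d\bbZ$ (cyclic, since $\pi_1(Y)$ is finite and acts freely with a hyper-Kähler quotient of the expected shape; more precisely one knows $\pi_1(Y)$ is cyclic in this setting, or one may run the argument for a generator of any prime-order subgroup). The key geometric input is that $G$ acts \emph{freely} on $X$, so the holomorphic Lefschetz fixed-point formula forces the holomorphic Euler characteristic and, more usefully, the topological Euler characteristic relation $\chi_{\mathrm{top}}(X)=d\cdot\chi_{\mathrm{top}}(Y)$, together with the fact that $\varphi=$ a generator of $G$ is a \emph{non-symplectic} automorphism (it must act nontrivially on $H^{2,0}$, since otherwise the quotient would again be hyper-Kähler, not Enriques).

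First I would pin down which orders $d$ are a priori possible: since $\varphi$ acts on the one-dimensional space $H^{2,0}(X)$ by a primitive $d$-th root of unity and $X$ is projective of OG10 type, results on non-symplectic automorphisms of OG10-type manifolds (Euler characteristic $\chi_{\mathrm{top}}(X)=24$, and constraints on the order) restrict $d$ to a short list; in particular the relevant index for an Enriques manifold is prime, so it suffices to treat $d\in\{2,3\}$ (the orders for which Theorem~\ref{MainThm} gives an explicit polynomial) together with ruling out the remaining admissible primes by the same eigenvalue bookkeeping. Next, the crucial point: because $\varphi$ acts freely, the Lefschetz number $L(\varphi^k)=\sum_i (-1)^i \operatorname{tr}(\varphi^{k*}\,|\,H^i(X,\bbQ))$ vanishes for every $k$ not divisible by $d$. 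Averaging over the group, $\sum_{k=0}^{d-1} L(\varphi^k) = d\cdot \chi_{\mathrm{top}}(Y)$, but also this sum equals $d$ times the \emph{signed} trace of the projector onto $G$-invariants; combining with the explicit module decomposition \eqref{tot-LLV-dec} and the Hodge-theoretic positivity, one gets that $\eig(\varphi^*,H^*(X,\bbQ),1)$ must equal a specific integer $\chi_{\mathrm{top}}(Y)\cdot$(something), and in particular the free action forces $\chi_{\mathrm{top}}(X)=24=d\cdot\chi_{\mathrm{top}}(Y)$, so $d\mid 24$.

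Then I would combine this with Theorem~\ref{MainThm}: for $\ord(\varphi)\in\{2,3\}$, $\eig(\varphi^*,H^*(X,\bbQ),1)$ is the prescribed polynomial $P_d$ in the single integer variable $t:=\eig(\varphi^*,H^2(X,\bbQ),1)$ of Table~\ref{table_pols}. On the other hand, the freeness of the action imposes a second, independent equation on the full invariant cohomology — via the Lefschetz/holomorphic-Lefschetz vanishing it must equal $\chi_{\mathrm{top}}(Y)\cdot$(a fixed quantity), or more directly one checks that $\sum_i (-1)^i \eig(\varphi^*,H^i,1)$ is forced to be $0$ by $\sum_{k\not\equiv 0} L(\varphi^k)=0$ combined with $L(\mathrm{id})=24$. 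Feeding the allowed range of $t$ (itself constrained: $1\le t\le 23$ and further restricted by the known structure of the invariant lattice for non-symplectic automorphisms, e.g. $t = b_2 - (\text{rank of the coinvariant lattice})$ with divisibility conditions from the order) into $P_d$, one finds that $P_d(t)$ never takes the value demanded by the free-quotient constraint. The conclusion is that no such $\varphi$ exists, hence no such Enriques manifold.

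The main obstacle I expect is the bookkeeping at the even-order case: Theorem~\ref{MainThm} only determines $\eig(\varphi^*,H^*,1)$ up to the sign ambiguity (the $\pm\eig(\gamma_{(2,2)},\V22,1)$ in cases (a) versus (b)), so for $d=2$ one must handle both branches and show the free-quotient equation fails in \emph{each}; this requires knowing $\eig(\gamma_{(2,2)},\V22,1)$ precisely enough (or at least its parity/size relative to $\eig(\gamma_{(5)},\Ver,1)$) to exclude both polynomials simultaneously. A secondary difficulty is justifying that $\pi_1(Y)$ is cyclic and that it is enough to treat prime order — but that follows from the definition of Enriques manifold together with the fact that a free quotient of a hyper-Kähler by an abelian non-cyclic group cannot have the right fundamental group, or simply by passing to a prime-order subquotient, which still gives an étale quotient of $X$ of OG10 type to which the same contradiction applies.
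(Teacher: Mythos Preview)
Your overall strategy matches the paper's: reduce to prime order $d\in\{2,3\}$, use multiplicativity of the Euler characteristic under the free quotient, and check that the polynomials of Theorem~\ref{MainThm} never hit the required value. However, there is a genuine numerical/conceptual error that breaks the argument as written.

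You repeatedly assert $\chi_{\mathrm{top}}(X)=24$. This is false: for OG10 one has $b_2(X)=24$ but $\chi_{\mathrm{top}}(X)=e(X)=176904$. Consequently your divisibility argument ``$d\mid 24$'' is not valid; the correct topological constraint would be $d\mid 176904 = 2^3\cdot 3^5\cdot 7\cdot 13$, which does \emph{not} by itself reduce to primes $2$ and $3$. The paper instead uses the \emph{holomorphic} Euler characteristic: $d\cdot\chi(Y,\mathcal O_Y)=\chi(X,\mathcal O_X)=\dim X/2+1=6$, so $d\mid 6$ and hence only $d=2,3$ need to be excluded. You should replace your topological divisibility with this.

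Once $d\in\{2,3\}$ is established, the equation you want is
\[
176904 \;=\; e(X) \;=\; d\cdot e(Y) \;=\; d\cdot \eig(\varphi^*,H^*(X,\bbQ),1),
\]
the last equality using that $H^{\mathrm{odd}}(X,\bbQ)=0$ for OG10 (so the alternating sum in the Lefschetz number coincides with the honest dimension of invariants). Your text is vague here and again inserts the wrong value $L(\mathrm{id})=24$. With the correct numbers, one plugs the polynomials of Table~\ref{table_pols} (both branches (a) and (b) when $d=2$, branch (c) when $d=3$) and checks that $P_d(r)=176904/d$ has no integer solution in the admissible range of $r$; this is exactly what the paper does. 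Your handling of the sign ambiguity for $d=2$ is correct in spirit: both polynomials must be checked, and both fail.
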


\subsection*{Acknowledgments}
 The authors would like to thank Emanuele Macrì and Jieao Song for useful hints and discussion. AG would like to thank Alessandra Sarti for asking her about the existence of other examples of Enriques manifolds. FG thanks Giuseppe Ancona and Chenyu Bai for convincing him not to pursue an incorrect approach to the problem.
 We would like to thank Giovanni Mongardi, Gianluca Pacienza, Alessandra Sarti and Francesco Denisi for reading a preliminary version of the paper.
\section{Preliminaries}
In this section, we recall basic facts on Enriques manifolds and on the LLV-algebra.
\subsection{Enriques manifolds}
The definition that we refer to is Definition \ref{def_enr_man}. For the sake of clarification, it coincides with the definition of \textit{weak Enriques varieties} in \cite[\S2.2]{BN-WS} with the additional assumption of being irreducible with universal cover not strict Calabi-Yau, leaving the only possibility for the universal cover to be hyper-Kähler. Consider a compact hyper-Kähler manifold \(X\) of dimension \(2n\), \(n \geq 2\), and let \(f\) be an automorphism of \(X\) of order \(d \geq 2 \) such that the cyclic group generated by \(\langle f \rangle \) acts freely on \(X\).
It follows that \(f\) acts purely non-symplectic, i.e., there exists a primitive \(d\)-th root of the unity \(\zeta\), with \(d \geq 2\), such that the action of \(f\) on the symplectic form \(\sigma_{X}\) is given by \(f^*(\sigma_X)=\zeta \sigma_X\).
By a result of Beauville (\cite[Proposition 6]{Beauville:some.remarks}) the variety \(X\) is necessarily projective.
The étale quotient \(Y=X/\langle f \rangle\) is a smooth projective Enriques manifold of index \(d\).

\subsection{The LLV algebra}
If \(\omega\in H^2(X,\bbQ)\) is a Kähler class, there is an associated Lefschetz operator \(L_\omega=\omega\cup-\). It forms an \(\mathfrak{sl}(2)\)-triple \(\{L_\omega,h,\Lambda_\omega\}\) where \(\Lambda_\omega\) is the dual Lefschetz operator, and \(h\colon \mH^*(X,\bbQ) \to \mH^*(X,\bbQ)\) is the degree operator $h\cdot x= (k-\dim X)x$ for \(x\in \mH^k(X, \bbQ)\).
For any class \(\alpha\in H^2(X,\bbQ)\), the operator \(L_\alpha\) is still well defined and the existence (and uniqueness) of an operator \(\Lambda_\alpha\) satisfying the relation \([L_\alpha,\Lambda_\alpha]=h\) is an open condition in \(H^2(X,\bbQ)\) by the Jacobson–Morozov Theorem.

\begin{definition}
    The \textit{Looijenga–Lunts–Verbitsky (LLV) algebra} \(\mathfrak{g}\) of \(X\) is the Lie subalgebra of \(\mathfrak{gl}(H^*(X,\bbQ))\) generated by all the \(\mathfrak{sl}(2)\)-triples associated with classes \(\alpha\in H^2(X,\bbQ)\).
\end{definition}
The Lie algebra \(\mathfrak{g}\) is a semisimple algebra defined over \(\bbQ\).
Work of Verbitsky \cite{Ver90} and Looijenga-Lunts \cite{LL97} shows that $\kg$ can be identified with the special orthogonal algebra of the Mukai completion $V=\mH^2(X,\bbQ)\oplus U$ where $U$ is the hyperbolic plane.  In particular, it is isomorphic to the Lie algebra \(\mathfrak{so}(4,b_2(X)-2)\).

A remarkable fact is that the Lie algebra \(\mathfrak{g}\) and the decomposition of \(H^*(X,\bbQ)\) in irreducible \(\mathfrak{g}\)-representations are diffeomorphism invariants of \(X\), and hence they depend only on the deformation class of \(X\). The irreducible \(\mathfrak{g}\)-representations are indexed by their highest weights, which are
non-negative integral linear combinations of the fundamental weights. An irreducible \(\mathfrak{g}\)-representation with highest weight \(\mu\) will be denoted by \(V_\mu\).  

If \(X\) is a manifold of OG10 type, then the LLV algebra \(\mathfrak{g}\) is a Lie algebra of type D. The decomposition 
\[H^*(X,\bbQ)\cong V_{(5)}\oplus V_{(2,2)}\]
in irreducible \(\mathfrak{g}\)-representations has been computed in \cite[Theorem 1.2 (iv)]{GKLR}. We refer to \cite[Appendix A.2]{GKLR} for a detailed description of the weights. Here the representation \(V_{(5)}\) is naturally identified with the image of the map \(\Sym^* H^2(X,\bbQ)\to H^*(X,\bbQ)\), and it is called the \textit{Verbitsky component}, according to the original paper \cite{Ver90}.

\section{Proofs}
In this section we give the complete proofs of the main results, namely Theorem~\ref{MainThm} and Corollary~\ref{cor:no-Enriques}.
\subsection*{Notation}
Since we only consider cohomology groups of OG10 type manifolds with rational coefficients, we simplify the notation by writing $H^*$ for the full cohomology $H^*(X, \Q)$ and similarly by $H^2$ for the cohomology group $H^2(X, \mathbb Q)$. If \(V\) is a \(\mathbb{Q}\)-vector space, then we denote by \(V_{\mathbb{C}}\) the space \(V\otimes\mathbb{C}\).

For an automorphism $\gamma$ on a vector space \(V\), we denote the eigenspace corresponding to the eigenvalue $\lambda\in\mathbb{C}$ by $\Eig(\gamma,V, \lambda)$, omitting the explicit mention of $\gamma$ when there is no risk of confusion.

\begin{remark}
    Given a Lie algebra $\kg$ acting on a vector space $V$, we observe that the smallest $\kg$-submodule containing a subset $W\subset V$ is given by
    \begin{align*}
        \kg \cdot W = \left\{ \sum g_{i1}\cdot g_{i2} \cdots g_{ij_i}\cdot x_i \mbox{ such that } x_i\in W,\ g_{ji}\in\kg, \mbox{ and the sum is finite}\right\}.
    \end{align*}
\end{remark}

    \begin{proposition}\label{prop:AB-decomposition}
        There exist unique $\mathfrak g$-submodules $A,B \subset H^*(X)$ such that $A \simeq V_{(5)}$ and $B \simeq V_{(2,2)}$, so that 
        \begin{align}\label{decomposition}
            H^*(X,\mathbb{Q}) = A \oplus B
        \end{align}
        as $\kg$-representations. 
    \end{proposition}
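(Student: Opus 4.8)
The plan is to exploit the fact, recalled in the preliminaries, that the $\kg$-module decomposition $H^*(X,\Q)\cong V_{(5)}\oplus V_{(2,2)}$ holds, and that $V_{(5)}$ and $V_{(2,2)}$ are \emph{non-isomorphic} irreducible $\kg$-representations (they have distinct highest weights $(5)$ and $(2,2)$). Existence of submodules $A,B$ with $A\simeq V_{(5)}$ and $B\simeq V_{(2,2)}$ and $H^*(X,\Q)=A\oplus B$ is then just a restatement of the cited computation of \cite{GKLR}: abstractly $H^*(X,\Q)$ is a direct sum of two irreducibles, and picking the isotypic pieces gives honest submodules realizing each summand. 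So the only real content is \textbf{uniqueness}.

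For uniqueness I would argue via Schur's lemma / the isotypic decomposition. Since $\kg$ is semisimple (stated in the excerpt) and we work over a field of characteristic zero, every $\kg$-module is semisimple, so $H^*(X,\Q)$ decomposes into isotypic components indexed by isomorphism classes of irreducibles. Because $V_{(5)}\not\simeq V_{(2,2)}$, the two summands $V_{(5)}$ and $V_{(2,2)}$ lie in distinct isotypic components, and each appears with multiplicity one. The isotypic component of $H^*(X,\Q)$ of type $V_{(5)}$ is therefore exactly $A$, and likewise the isotypic component of type $V_{(2,2)}$ is exactly $B$; isotypic components are canonically determined (they are the sum of \emph{all} submodules isomorphic to the given irreducible), hence $A$ and $B$ are unique. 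Equivalently: if $A'\simeq V_{(5)}$ were another submodule, then the projection $A'\to B$ along $A$ would be a $\kg$-morphism $V_{(5)}\to V_{(2,2)}$, hence zero by Schur, so $A'\subseteq A$, and equality follows by comparing dimensions (or by irreducibility of $A'$ and $A$).

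Concretely, I would phrase the proof as: (i) invoke $H^*(X,\Q)\cong V_{(5)}\oplus V_{(2,2)}$ from \cite[Theorem 1.2(iv)]{GKLR} and note the two highest weights $(5)$ and $(2,2)$ are distinct; (ii) take $A$ to be the sum of all $\kg$-submodules of $H^*(X)$ isomorphic to $V_{(5)}$ and $B$ the sum of all $\kg$-submodules isomorphic to $V_{(2,2)}$ — these are canonical, hence unique; (iii) check $A\simeq V_{(5)}$, $B\simeq V_{(2,2)}$ and $A\oplus B=H^*(X)$ using that the multiplicity of each is exactly one (from the decomposition) and that $\mathrm{Hom}_\kg(V_{(5)},V_{(2,2)})=0$ by Schur's lemma. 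I do not expect a serious obstacle here; the statement is essentially a bookkeeping consequence of complete reducibility and Schur's lemma, and the only thing to be slightly careful about is making the uniqueness clause precise — namely that "unique submodule abstractly isomorphic to $V_{(5)}$'' is meant, which follows because the relevant isotypic multiplicity is one.
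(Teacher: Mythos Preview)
Your proof is correct and follows essentially the same Schur's lemma argument as the paper: since $V_{(5)}\not\simeq V_{(2,2)}$ are irreducible with multiplicity one, any submodule isomorphic to one of them must project trivially to the other summand and hence coincide with its isotypic component. The only difference is that the paper also records the concrete identification of $A$ with the Verbitsky component $\mathrm{Im}(\Sym^*H^2(X,\Q)\to H^*(X,\Q))=\kg\cdot H^0(X,\Q)$, which is used in the proof of the next proposition to check $\varphi^*(A)=A$.
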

    \begin{proof}
        From the original paper of Verbitsky \cite{Ver90} the submodule $\Ver$ is naturally identified with the image of the natural morphism $\Sym^* H^2(X,\Q) \to H^*(X,\Q)$, which can be described also as $\mathfrak g \cdot H^0(X,\Q)$. Let $B, B'$ be two $\mathfrak g$-submodules of $H^*(X,\Q)$ isomorphic to $\V22$. Then the composition 
        \[
        B\hookrightarrow H^*(X,\Q) \to H^*(X,\Q)/B' \simeq \Ver
        \]
        is the zero morphism by Schur's lemma. Hence $B$ coincides with the kernel of the projection, that is $B'$.
    \end{proof}
 
    \begin{proposition}\label{prop:AB-eig}
        Let $\varphi\in \Aut (X)$ be of finite order. Then $\varphi^*$ preserves the LLV decomposition \eqref{decomposition}, i.e. $\varphi^*(A) = A$ and $\varphi^*(B) = B$, so that
        \[
        \eig (\varphi^*, H^*(X,\Q),1) = \eig (\varphi^*, A,1) + \eig(\varphi^*, B,1).
        \]
    \end{proposition}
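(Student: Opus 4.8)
The plan is to show that $\varphi^*$ normalizes the LLV algebra $\kg\subset\mathfrak{gl}(H^*)$ and then to read off the invariance of $A$ and $B$ from the fact that $H^*=A\oplus B$ is a multiplicity-free semisimple $\kg$-module. Since $\varphi$ is an automorphism, $\varphi^*$ is a graded $\bbQ$-algebra automorphism of $H^*$: it preserves each $H^k$ (hence commutes with the degree operator $h$ and preserves the one-dimensional space $H^0(X,\bbQ)$), and it satisfies $\varphi^*\circ L_\alpha\circ(\varphi^*)^{-1}=L_{\varphi^*\alpha}$ for every $\alpha\in H^2$ by multiplicativity of the cup product. The key step is to promote this to the dual Lefschetz operators: conjugating the $\mathfrak{sl}(2)$-triple $\{L_\alpha,h,\Lambda_\alpha\}$ by $\varphi^*$ produces an $\mathfrak{sl}(2)$-triple whose semisimple element is again $h$ and whose raising operator is $L_{\varphi^*\alpha}$, so by uniqueness of the lowering operator in such a triple one obtains $\varphi^*\circ\Lambda_\alpha\circ(\varphi^*)^{-1}=\Lambda_{\varphi^*\alpha}\in\kg$. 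As these triples generate $\kg$, and since the set of classes $\alpha$ for which $\Lambda_\alpha$ exists is itself $\varphi^*$-stable, conjugation by $\varphi^*$ permutes a generating set of $\kg$; hence $\varphi^*\,\kg\,(\varphi^*)^{-1}=\kg$, and in particular $\varphi^*$ carries $\kg$-submodules of $H^*$ to $\kg$-submodules.

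I would then conclude as follows. By the description of the Verbitsky component recalled in the proof of Proposition~\ref{prop:AB-decomposition}, $A=\kg\cdot H^0(X,\bbQ)$, whence
\[
\varphi^*(A)=\bigl(\varphi^*\,\kg\,(\varphi^*)^{-1}\bigr)\cdot\varphi^*\bigl(H^0(X,\bbQ)\bigr)=\kg\cdot H^0(X,\bbQ)=A.
\]
Since $\varphi^*$ is bijective, $\varphi^*(B)$ is then a $\kg$-submodule with $A\oplus\varphi^*(B)=\varphi^*(A\oplus B)=H^*$. Because $A\simeq\Ver$ and $B\simeq\V{2}{2}$ are non-isomorphic irreducibles, the only $\kg$-submodules of $H^*=A\oplus B$ are $0$, $A$, $B$ and $H^*$, and the only one of these complementary to $A$ is $B$; hence $\varphi^*(B)=B$. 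Finally, since $H^*=A\oplus B$ is a $\varphi^*$-stable decomposition, $\Eig(\varphi^*,H^*,1)=\Eig(\varphi^*,A,1)\oplus\Eig(\varphi^*,B,1)$, which is exactly the asserted additivity of dimensions.

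The only genuine obstacle I anticipate is the normalization statement — concretely, checking that conjugation by $\varphi^*$ sends every dual Lefschetz operator $\Lambda_\alpha$ back into $\kg$, since (unlike $L_\alpha$) the operator $\Lambda_\alpha$ is not given by an explicit geometric formula and one must argue through the uniqueness built into the Jacobson--Morozov setup. Everything after that is formal: it is just the submodule structure of a multiplicity-free semisimple module together with the identification $A=\kg\cdot H^0(X,\bbQ)$.
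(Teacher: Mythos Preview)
Your proof is correct and shares its core with the paper's: both establish that conjugation by $\varphi^*$ sends $L_\alpha$ to $L_{\varphi^*\alpha}$ and $\Lambda_\alpha$ to $\Lambda_{\varphi^*\alpha}$ (the paper invokes this explicitly when computing $\varphi^*(L_\alpha\cdot x)$ and $\varphi^*(\Lambda_\beta\cdot x)$), so the normalization step you flagged as the only genuine obstacle is exactly the step the paper also relies on. For $A$ the two arguments are equivalent, the paper using the description of $A$ as the image of $\Sym^* H^2\to H^*$ rather than $\kg\cdot H^0$.

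The genuine difference is in the treatment of $B$. The paper proceeds geometrically: it shows that $A\cap H^{10}$ and $B\cap H^{10}$ are orthogonal for the intersection pairing on $H^{10}$, uses that $\varphi^*$ is an isometry preserving $A$ to deduce $\varphi^*(B\cap H^{10})\subset B\cap H^{10}$, and then generates all of $B$ from any nonzero element of $B\cap H^{10}$ via the $\kg$-action. Your argument---that $\varphi^*(B)$ is a $\kg$-submodule complementary to $A$ in a multiplicity-free module, hence equal to $B$---is shorter and is really just a direct appeal to the uniqueness already established in Proposition~\ref{prop:AB-decomposition}. The paper's route makes the intersection-form geometry visible but buys nothing extra logically; yours is the more economical packaging of the same input.
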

    \begin{proof}
        As $A= \mathfrak g \cdot H^0$ we can write any element $x\in A$ as
        \[
        x = \beta + \sum \alpha_{i_1} \cup \cdots\cup \alpha_{i_r}
        \]
        for some $\beta\in H^0$ and $\alpha_{i_j}\in H^2$, so that we compute 
        \[
        \varphi^*x = \varphi^*\beta + \sum \varphi^*\alpha_{i_1} \cup \cdots\cup \varphi^*\alpha_{i_r}
        \]
        and conclude that $\varphi^* x\in A$, as wanted.

        Now we turn our attention to the $\kg$-submodule $B\simeq\V22$. First, notice that the direct sum
        \[
        H^{10}(X,\Q) = \left( A \cap H^{10}(X,\Q) \right) \oplus \left( B \cap H^{10}(X,\Q)\right)
        \]
        is orthogonal with respect to the intersection form. Indeed, the intersection product of an element $a\in A\cap H^{10}$ with an element $b\in B\cap H^{10}$ can be expressed in terms of the $\kg$-action on $b$, and \(B\cap H^{20}=0\).
        As $\varphi^*$ acts as an isometry for the intersection form and $\varphi^* (A) = A$, for any $x\in B \cap H^{10}$ we have $\varphi^* x\in B \cap H^{10}$. As $B\simeq\V22$ is irreducible, we can write $B = \mathfrak g \cdot x$ for any nonzero element $x\in B \cap H^{10}$, so that any element $y\in B$ can be written as finite sum of elements of the form:
\begin{align*}
    g_1\cdot g_2\cdots g_r \cdot x
\end{align*}
where $g_i=L_{\alpha}$ or $g_i=\Lambda_{\beta}$ for some $\alpha, \beta \in H^2(X,\mathbb Q)$.
        We compute
        \[
        \varphi^* (L_\alpha\cdot x) = \varphi^* (\alpha\cup x) = \varphi^* \alpha\cup \varphi^* x = L_{\varphi^* \alpha}\cdot \varphi^* x
        \] 
        and similarly for the dual Lefschetz operator $\Lambda_\beta := *^{-1}\circ L_\beta \circ *$ for some $\beta\in H^2$, i.e. $\varphi^*(\Lambda_\beta \cdot x) = \Lambda_{\varphi^*\beta} \cdot \varphi^* x$. We conclude that $\varphi^* y\in B$ as it is the finite sum of 
\begin{align*}
    \varphi^* (g_1\cdot g_2\cdots g_r \cdot x ) = g_1'\cdot g_2'\cdots g_r' \cdot \varphi^*x
\end{align*}
where $g_i'\in \kg$ and $\varphi^*x \in B$ as discussed above.
    \end{proof}
We now discuss the interaction between the $\kg$-action and the action on $H^*(X)$ of an automorphism $\varphi$.
Let $V = H^2(X,\Q) \oplus U$ be the Mukai extension. Let us define an automorphism $\gamma$ on $V$ by setting 
\begin{align}\label{eq:gamma}
    \gamma|_{H^2} = \varphi^*|_{H^2},\qquad \gamma|_U = \mathrm{id}_U.
\end{align}
This induces an automorphism $\gamma_\rho$ on any $\kg$-representation $V_\rho$. 

\begin{proposition}\label{prop: B-V22-compatibility}
    Let $\varphi\in \Aut (X)$ be of finite order $\ord (\varphi)=n$.
    
    If $n$ is even, then
    \begin{align*}
    \text{either }  &\eig(\varphi^*, B, 1) = \eig (\gamma_{(2,2)}, \V22, 1) \\
    \text{or } &\eig(\varphi^*, B, 1) = \eig (-\gamma_{(2,2)}, \V22, 1).
    \end{align*}
    
    If $n$ is odd, then
    \[
    \eig(\varphi^*, B, 1) = \eig (\gamma_{(2,2)}, \V22, 1).
    \]
\end{proposition}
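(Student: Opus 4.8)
The plan is to compare the action of $\varphi^*$ on the $\kg$-submodule $B \simeq \V22$ with the action of the induced automorphism $\gamma_{(2,2)}$ on the abstract representation $\V22$. The key point is that $\varphi^*$ and $\gamma_{(2,2)}$ are two automorphisms of $\V22$ that are \emph{compatible with the $\kg$-action in a twisted sense}: for every $g \in \kg$ and every $x \in B$ one has, by the computations in the proof of Proposition~\ref{prop:AB-eig},
\[
\varphi^*(g \cdot x) = \gamma(g) \cdot \varphi^*(x),
\]
where $\gamma$ acts on $\kg = \mathfrak{so}(V)$ by conjugation through its action on $V$. The same twisted equivariance holds tautologically for $\gamma_{(2,2)}$. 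Hence the composition $\psi := \gamma_{(2,2)}^{-1} \circ \varphi^*$ (after fixing an abstract isomorphism $B \simeq \V22$) is a genuine $\kg$-module automorphism of the irreducible representation $\V22$, so by Schur's lemma $\psi$ is multiplication by a scalar $c \in \bbC^*$. Therefore $\varphi^* = c \cdot \gamma_{(2,2)}$ on $B$.

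Next I would pin down $c$. Since $\varphi^*$ has finite order $n$ and $\gamma_{(2,2)}$ has order dividing $n$ as well (it is induced by the finite-order isometry $\gamma$ of $V$), the scalar $c$ is an $n$-th root of unity; but more is true. The representation $\V22$ is defined over $\bbQ$ and $\varphi^*$, $\gamma_{(2,2)}$ both act by rational (indeed integral, up to the lattice structure) matrices on a $\bbQ$-form of $\V22$; a scalar relating two rational operators must itself be rational, hence $c = \pm 1$. This already gives $\varphi^* = \pm \gamma_{(2,2)}$ on $B$, so $\eig(\varphi^*, B, 1)$ equals either $\eig(\gamma_{(2,2)}, \V22, 1)$ or $\eig(-\gamma_{(2,2)}, \V22, 1)$, which is the even-order statement. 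For the odd-order case, I would rule out $c = -1$: if $n$ is odd then $-\gamma_{(2,2)}$ would have even order $2n$ (its eigenvalues are $-1$ times roots of unity of odd order, hence include a primitive $2n$-th root), contradicting that $\varphi^* = c\gamma_{(2,2)}$ has order dividing $n$; alternatively, raise $\varphi^* = c\gamma_{(2,2)}$ to the $n$-th power to get $\mathrm{id} = c^n \gamma_{(2,2)}^n = c^n \cdot \mathrm{id}$, so $c^n = 1$, which forces $c = 1$ when $n$ is odd and $c = \pm 1$ when $n$ is even.

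The main obstacle I anticipate is the rationality argument that forces $c \in \{\pm 1\}$ rather than merely an $n$-th root of unity: one needs to be careful that the abstract isomorphism $B \simeq \V22$ can be chosen to respect $\bbQ$-structures (which follows since the LLV decomposition \eqref{decomposition} is defined over $\bbQ$, by Proposition~\ref{prop:AB-decomposition} together with semisimplicity of $\kg$ over $\bbQ$), and that $\gamma$ itself — hence $\gamma_{(2,2)}$ — is rational, which is immediate from \eqref{eq:gamma} since $\varphi^*|_{H^2}$ is rational and $\gamma|_U = \mathrm{id}_U$. Once rationality is in place, the scalar $c$ lies in $\bbQ$ and is a root of unity, hence $\pm 1$, and the order computation disposes of the sign in the odd case. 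A minor point to check is that $\gamma_{(2,2)}$ genuinely has finite order and that $\V22$ really is irreducible as a $\kg$-module over $\bbC$ (so Schur applies as stated) — both are recorded in the preliminaries via \cite{GKLR}.
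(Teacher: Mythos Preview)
Your proposal is correct and follows essentially the same route as the paper: both arguments transport $\varphi^*|_B$ to $\V22$ via a chosen $\kg$-isomorphism, observe that it satisfies the same twisted equivariance $\psi(L_\alpha\cdot x)=L_{\varphi^*\alpha}\cdot\psi(x)$ as $\gamma_{(2,2)}$, compose with $\gamma_{(2,2)}^{-1}$ to obtain a genuine $\kg$-endomorphism, and invoke Schur plus the $n$-th power identity to force the scalar to be $\pm1$ (and $+1$ when $n$ is odd). The only cosmetic difference is that the paper obtains $\lambda\in\bbQ^*$ directly by working with Schur over $\bbQ$ (using absolute irreducibility of $\V22$ implicitly), whereas you first argue $c$ is a root of unity and then separately invoke rationality; both paths yield the same conclusion.
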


\begin{proof}
    Let $\varphi\in \Aut(X)$ be any automorphism of order $n$. Let $f$ be any isomorphism $B\xrightarrow{\sim} \V22$ of $\kg$-representations. Then $\psi := f\circ \varphi^*\circ f^{-1}$ is an automorphism of $\V22$ which satisfies
    \[
    \psi (L_\alpha \cdot x) = L_{\varphi^*\alpha} \cdot \psi (x)
    \]
    and $f|\colon \Eig(\varphi, B,1) \xrightarrow{\sim} \Eig(\psi,\V22,1)$.
    Let $\gamma$ be defined as in Equation~\eqref{eq:gamma}, and let $\gamma_{(2,2)}$ be the induced automorphism on $\V22$. We have
    \[
    \gamma_{(2,2)} (L_\alpha \cdot x) = L_{\varphi^*\alpha} \cdot \gamma_{(2,2)} (x).
    \]

One has that $\gamma_{(2,2)}^n = \mathrm{id}$, and the composition $\psi\circ \gamma^{n-1}$ is an automorphism of $\kg$-representations, indeed
\[
\psi\circ\gamma_{(2,2)}^{n-1} (L_\alpha \cdot x) =
\psi(L_{(\varphi^*)^{n-1}\alpha} \cdot \gamma_{(2,2)}^{n-1}(x)) =
L_{\varphi^*(\varphi^*)^{n-1}\alpha} \cdot \psi \circ\gamma_{(2,2)}^{n-1}(x) =
L_\alpha \cdot \psi \circ\gamma_{(2,2)}^{n-1}(x).
\]

By Schur's lemma every $\kg$-isomorphism of $\V22$ differs by a non-trivial constant, in particular  $\psi\circ \gamma_{(2,2)}^{n-1} = \psi \circ \gamma_{(2,2)}^{-1}=\lambda\cdot\mathrm{id}_{\V22}$ for some $\lambda\in \mathbb Q^*$, so that $\psi = \lambda \cdot \gamma_{(2,2)}$. By the equality 
\begin{align*}
\lambda^n\cdot\mathrm{id} = (\lambda\cdot \gamma_{(2,2)})^n   = \psi^n = \mathrm{id}
\end{align*}
we get that $\lambda =1$ if $n$ is odd, and $\lambda= 1$ or $\lambda=-1$ if $n$ is even. This shows the claim.
\end{proof}

\begin{lemma}\label{lem:V22-compuation}
If $\ord(\varphi) = 2$ with $r:=\eig(\varphi^*, H^2,1)$, then
    \begin{align}\label{eq:ord2-V22}
    \eig (\gamma_{(2,2)}, \V22, 1) =  \frac{2}{3}r^4 - \frac{88}{3}r^3 + \frac{1447}{3}r^2 + \frac{10538}{3}r + 28500.
    \end{align}
    
If $\ord(\varphi) = 3$ with $2r:=\eig(\varphi^*,H^2,1)$, then
    \begin{align}\label{eq:ord3-V22}
        \eig (\gamma_{(2,2)}, \V22, 1) =\frac{9}{2}r^4 - 60r^3 + \frac{597}{2}r^2 - 675r + 13158.
    \end{align}
\end{lemma}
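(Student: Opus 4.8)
The plan is to compute the dimension of the invariant subspace $\Eig(\gamma_{(2,2)},\V22,1)$ by realizing $\V22$ inside the explicit $\kg$-module $\Sym^2(\Lambda^2 V)$ and tracking eigenvalues through the construction. First I would fix notation: since $\ord(\varphi)=2$ or $3$, the isometry $\varphi^*|_{H^2}$ is semisimple, and over $\mathbb C$ the space $V_{\mathbb C}=H^2_{\mathbb C}\oplus U_{\mathbb C}$ splits into $\gamma$-eigenspaces. For $\ord(\varphi)=2$, writing $b_2=24$ for OG10 type, $\gamma$ has eigenvalue $1$ with multiplicity $r+2$ (the extra $2$ coming from $U$, on which $\gamma$ acts trivially) and eigenvalue $-1$ with multiplicity $24-r$. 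For $\ord(\varphi)=3$, the eigenvalue $1$ occurs with multiplicity $2r+2$ and the primitive cube roots $\zeta,\bar\zeta$ each with multiplicity $12-r$ (using that $\varphi^*$ is a rational isometry, so non-real eigenvalues are paired; note the hypothesis writes $\eig(\varphi^*,H^2,1)=2r$, consistent with this pairing forcing an even invariant corank only after accounting for $U$).

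Next I would compute the character of $\gamma$ acting on $\Lambda^2 V$ and then on $\Sym^2(\Lambda^2 V)$, purely in terms of the eigenvalue multiplicities above: if $\gamma$ on $V_{\mathbb C}$ has eigenvalues $\mu_1,\dots,\mu_N$ (with $N=b_2+2=26$), then the eigenvalues on $\Lambda^2 V$ are $\mu_i\mu_j$ for $i<j$, and on $\Sym^2(\Lambda^2V)$ they are products of pairs (with multiplicity) of these. Counting how many such products equal $1$ is an elementary combinatorial computation in the three integers (multiplicities of $1$, $-1$ in the order-$2$ case; of $1$, $\zeta$, $\bar\zeta$ in the order-$3$ case), and yields a polynomial in $r$. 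The key subtlety is that $\V22$ is only the \emph{largest irreducible submodule} of $\Sym^2(\Lambda^2 V)$, not all of it: one must subtract off the invariant dimensions of the complementary $\kg$-submodules. Using the explicit decomposition of $\Sym^2(\Lambda^2 V)$ into irreducibles for $\kg$ of type $D_{13}$ (available from the description in \cite[Appendix A.2]{GKLR}, or classically via Littlewood–Richardson / plethysm for $\mathfrak{so}$), each complementary piece is a $V_\mu$ for an explicit smaller highest weight $\mu$, and each such $V_\mu$ is again a functorial construction from $V$ whose $\gamma$-invariant dimension is a polynomial in $r$ computed the same way; subtracting gives the stated quartic.

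The main obstacle I anticipate is bookkeeping accuracy rather than conceptual difficulty: one must get the decomposition of $\Sym^2(\Lambda^2 V)$ for the correct orthogonal Lie algebra exactly right (including the trivial summands and the Cartan-square piece $V_{(2)}$-type contributions from $\Lambda^2 V\hookrightarrow \Sym^2(\Lambda^2V)$ via the quadratic form, and the copy of $\Lambda^4 V$), and propagate the three-integer counts without arithmetic slips. A useful internal consistency check is to specialize: when $\varphi^*=\mathrm{id}$ (so $r=24$ in the order-$2$ normalization, $r=0$ is \emph{not} this case — rather one evaluates at the value making $\gamma=\mathrm{id}$) the formula must return $\dim \V22 = b_{10}(X)/\!\dots$, i.e.\ the known Betti-number contribution of the $V_{(2,2)}$-component of $H^*(X,\Q)$ for OG10, namely $\dim V_{(2,2)}$; and for $\varphi^*=-\mathrm{id}$ on $H^2$ (which on $\V22\subset \Sym^2(\Lambda^2V)$ acts trivially since it is even in $V$) the formula must again return $\dim\V22$. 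Matching these two endpoint values pins down the quartic and catches any error in the combinatorial count.
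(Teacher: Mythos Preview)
Your approach is the same as the paper's: one uses the explicit decomposition
\[
\Sym^2(\Lambda^2 V)\;\simeq\;\Sym^2 V\;\oplus\;\Lambda^4 V\;\oplus\;V_{(2,2)}
\]
of $\kg$-modules (the paper verifies this via Sage), computes the $\gamma$-invariant dimension on each of $\Sym^2(\Lambda^2 V)$, $\Sym^2 V$, $\Lambda^4 V$ by splitting $V_{\mathbb C}$ into $\gamma$-eigenspaces, and subtracts. So the complementary pieces are exactly $\Sym^2 V$ (which is $V_{(2)}\oplus\bbQ$) and $\Lambda^4 V$; there is no further $\Lambda^2 V$-type summand to worry about.

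One genuine slip in your consistency check: when $\varphi^*=-\mathrm{id}$ on $H^2$ (i.e.\ $r=0$), the isometry $\gamma$ is \emph{not} $-\mathrm{id}$ on $V$, since by construction $\gamma|_U=\mathrm{id}_U$. Thus $\gamma$ has a $2$-dimensional $(+1)$-eigenspace and a $24$-dimensional $(-1)$-eigenspace on $V$, and $\gamma_{(2,2)}$ is certainly not trivial; the formula does \emph{not} return $\dim V_{(2,2)}$ at $r=0$. The only legitimate endpoint check is $r=24$ (where $\gamma=\mathrm{id}_V$), giving
\[
\dim V_{(2,2)}=\binom{326}{2}-\binom{26}{4}-\binom{27}{2}=52975-14950-351=37674.
\]
Relying on the false $r=0$ check to ``pin down the quartic'' would lead you astray, so drop it.
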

\begin{proof}
A computation in representation theory
 gives the following isomorphism of $\mathfrak g$-representations
        \[
        \Sym^2 \Lambda^2 V \simeq \Sym^2 V \oplus \Lambda^4 V \oplus \V22.
        \] 
This can be readily checked on Sage \cite{sage} with the following lines\footnote{We thank Jieao Song for explaining this to us.}
\begin{verbatim}
D=WeylCharacterRing("D13");
V.exterior_power(2).symmetric_power(2)
V.exterior_power(4)
V.symmetric_power(2)
\end{verbatim}
We firstly treat the case of an involution $\gamma$ on $V$, so that $\eig(\gamma,V,1) = r+2$ where $r:= \eig(\varphi^*,H^2,1)$. We define $V_+ = \Eig(V,1)$ and $V_- = \Eig(V,-1)$ and we compute
\[
\Sym^2V = \Sym^2(V_+\oplus V_-) = \underbrace{\Sym^2 V_+ \oplus \Sym^2 V_-}_{\Eig(1)} \oplus \underbrace{V_+\otimes V_-}_{\Eig(-1)}
\]
thus 
\begin{align}\label{formula_2}
\eig(\Sym^2 V,1) = 
r^2-22r+303.
\end{align}

Analagously, one computes
\begin{align*}
    (\Lambda^4 V)_+ &= \Lambda^4 V_+ \oplus (\Lambda^2 V_+ \otimes \Lambda^2 V_-) \oplus \Lambda^4 V_-
    \end{align*}
    from which it follows that 
    \begin{align*}
    \eig(\Lambda^4 V,1) &= 
    \frac{1}{3}r^4 - \frac{44}{3}r^3 + \frac{689}{3}r^2 - \frac{4510}{3}r + 10902.
\end{align*}
Similarly as before, we have 
\begin{align*}
(\Lambda^2V)_+ = \Lambda^2V_+ \oplus \Lambda^2V_-
\end{align*}
which has dimension
\begin{align*}
\eig(\Lambda^2 V,1)=
r^2 - 22r + 277.
\end{align*}
One can easily check that 
\begin{align*} \Eig(\Sym^2\Lambda^2 V,1) &= \Sym^2( (\Lambda^2 V)_+) \oplus\Sym^2( (\Lambda^2 V)_-)
\end{align*}
and compute 
\begin{align*}
\eig(\Sym^2 \Lambda^2 V,1) &=
r^4 - 44r^3 + 713r^2 + 5038r + 39679.
\end{align*}
Finally, putting all together we get
\begin{align*}
\eig(\V22,1) &= \eig(\Sym^2 \Lambda^2 V,1) - \eig(\Lambda^4 V,1) - \eig(\Sym^2 V,1) =\\
&= \frac{2}{3}r^4 - \frac{88}{3}r^3 + \frac{1447}{3}r^2 + \frac{10538}{3}r + 28500.
\end{align*}
\bigskip

We now treat the case of order 3. We consider the decomposition \[V  \otimes \mathbb C = V_+ \oplus \Vz \oplus \Vb\] in eigenspaces relative to \(1,\zeta,\overline{\zeta}\) respectively, where \(\zeta\) is a primitive third root of unity. Notice that as the action of $\varphi^*$ on $H^2(X)$ is integral, we have $\eig(\varphi^*,H^2_\mathbb C,\zeta) = \eig(\varphi^*,H^2_\mathbb C, \overline{\zeta})$. Thus, $\dim H^2(X,\Q)^{\varphi^*}$ is even and we denote it by $2r$, so that $\eig(\varphi^*, H^2(X,\mathbb C),\zeta) = \eig(\varphi^*,  H^2(X,\mathbb C),\overline\zeta) = 12-r$ and $\eig(\varphi^*, V_\mathbb C, \overline\zeta) = 12-r$. 
This gives the following decomposition
{\small\begin{align*}
    \Eig(\Lambda^4 V,1)=     \Lambda^4 V_+ \oplus \big(\Lambda^2 V_+ \otimes (\Vz \otimes \Vb)\big) \oplus \big(\Lambda^1V_+ \otimes (\Lambda^3\Vz \oplus \Lambda^3\Vb)\big) \oplus \big((\Lambda^2\Vz \otimes \Lambda^2\Vb)\big),
\end{align*}}
 from which we get 
\begin{align*}
    \eig(\Lambda^4 V,1)=
    \frac{9}{4}r^4-\frac{69}{2}r^3+\frac{783}{4}r^2-\frac{943}{2}r+5380.
\end{align*}
 Moreover, we have 
\begin{align*}
    \Eig(\Sym^2 V,1) &= \Sym^2 V_+\oplus \big(\Vz \otimes \Vb\big).
\end{align*}
and, since \(\dim V_+=2r+2\), it immediately follows that
\begin{align*}
    \eig(\Sym^2V,1)=
    3r^2-19r+147.
\end{align*}
Similarly, we have 
\begin{align*}
   \Eig(\Lambda^2 V,1) &=  \Lambda^2 V_+ \oplus \big(\Vz \otimes \Vb\big)
\end{align*}
and from the relation 
\begin{align*}\label{formula_3}
    \eig(\Lambda^2 V,\zeta)=
    \frac{\dim(\Lambda^2V)-\eig(\Lambda^2 V,1)}{2},
\end{align*}
we get 
\begin{align*}
    \eig(\Lambda^2 V,1)=3r^2-21r+145
\end{align*}
and 
\begin{align*}
    \eig(\Lambda^2 V,\zeta)=-\frac{3}{2}r^2+\frac{21}{3}r+90.
\end{align*}
Letting \((\Lambda^2 V)_+\), \((\Lambda^2 V)_\zeta\) and \((\Lambda^2 V)_{\overline{\zeta}}\) be the corresponding eigenspaces, we get 
\begin{align*}
\Eig(\Sym^2(\Lambda^2V),1)= \Sym^2((\Lambda^2 V)_+)\oplus((\Lambda^2 V)_\xi \otimes (\Lambda^2 V)_{\overline{\xi}})
\end{align*}
 from which it directly follows that
\begin{align*}
    \eig(\Sym^2(\Lambda^2V),1)=
    \frac{27}{4}r^4 - \frac{189}{2}r^3 + \frac{1989}{4}r^2 - \frac{2331}{2}r + 18685.
\end{align*}
Putting everything together we obtain the following formula
\begin{align*}
   \eig(V_{(2,2)},1)&= \eig(\Sym^2(\Lambda^2V),1)- \eig(\Sym^2(V),1)- \eig(\Lambda^4V,1)=\\
   &=\frac{9}{2}r^4 - 60r^3 + \frac{597}{2}r^2 - 675r + 13158,
\end{align*}
and this finishes the proof.

\end{proof}

\begin{proof}(Theorem~\ref{MainThm})
    Let $\varphi$ be an involution on a manifold \(X\) of OG10 type. Then, by Proposition~\ref{prop:AB-decomposition} we find $\kg$-subrepresentations $A,B\subset H^*(X)$, such that $A\simeq \Ver, B\simeq \V22$ and $H^*(X) = A \oplus B$. By Proposition~\ref{prop:AB-eig} the action of $\varphi^*$ respects this decomposition, hence we have
    \[
    \eig(\varphi^*, H^*(X), 1) = \eig(\varphi^*, A, 1) + \eig(\varphi^*, B, 1).
    \]
    As explained in Proposition~\ref{prop: B-V22-compatibility}, the action of $\varphi^*$ on $B$ is compatible with the action of $\gamma_{(2,2)}$ on $\V22$. Then one gets either
    \[\eig(\varphi^*, B, 1) = \eig(\gamma_{(2,2)}, \V22,  1)\] or
    \begin{align*}
    \eig(\varphi^*, B, 1) = \eig(-\gamma_{(2,2)}, \V22, 1)
 &= \eig(\gamma_{(2,2)}, \V22, -1)\\
 &= \dim \V22-\eig(\gamma_{(2,2)}, \V22, 1)
    \end{align*}
    and the actual computation of the latter one in term of $r:=\dim H^2(X,\Q)^{\varphi^*}$ is an elementary exercise in representation theory, carried out in Lemma~\ref{lem:V22-compuation}.

The computation of $\eig(\varphi^*, A, 1)$ can be carried out similarly, or more directly using that $A\cap H^{2k}(X,\Q) = \Sym^k H^2(X,\Q)$ for $k\leq 5$ together with Poincaré duality and the well-known formula $\Sym^k(W_1\oplus W_2) = \bigoplus_{i+j = k} \Sym^i(W_1) \otimes \Sym^j(W_2)$.
One gets:
\begin{align*}
\eig(\varphi^*, A, 1) = \frac{2}{15}r^5 - \frac{22}{3}r^4 + 170r^3 - \frac{6182}{3}r^2 + \frac{194833}{15}r + 35702.
\end{align*}
From Proposition~\ref{prop: B-V22-compatibility} it follows that by summing to the latter the polynomial of Lemma~\ref{lem:V22-compuation} one gets
\begin{align*}
\eig(\varphi^*, H^*(X,\mathbb{Q}), 1)= \frac{2}{15}r^5 - \frac{20}{3}r^4 + \frac{422}{3}r^3 - \frac{4735}{3}r^2 + \frac{47381}{5}r + 64202,
\end{align*}
which is precisely the claim for an involution in case a).
In the same way, by subtracting the polynomial of Lemma~\ref{lem:V22-compuation} one gets
\begin{align*}
\eig(\varphi^*, H^*(X,\mathbb{Q}),1)=
\frac{2}{15}r^5 - 8r^4 + \frac{598}{3}r^3 -2543r^2 + \frac{247523}{15}r + 7202,
\end{align*}
which is the claim for an involution in case b).
\bigskip

The same argument can be applied when \(\varphi\) is an automorphism of order 3.
In this case we have a decomposition of \(H^{2}(X,\mathbb{C})\) in eigenspaces for the eigenvalues $1,\ \mathbb \zeta,\ \overline{\zeta}$. In particular, as observed above we have $\eig(H^2, \varphi^*, 1))= 2r$ for an integer $0\leq r\leq 12$.
The same computation as before shows that
\begin{align*}
    \eig(\sigma^*, A,1) = \frac{27}{20}r^5 - 18r^4 + \frac{357}{4}r^3 - 168r^2 - \frac{63}{5}r + 46674
\end{align*}
and summing this to the polynomial in Equation~\eqref{eq:ord3-V22} one gets 
\[\eig(\varphi^*,H^*(X,\mathbb{Q}),1))=\frac{27}{20}r^5 - \frac{27}{2}r^4 + \frac{117}{4}r^3 + \frac{261}{2}r^2 - \frac{3438}{5}r + 59832,\]
which concludes the proof.

\end{proof}

\begin{table}[] \caption{Possible polynomials \(f(r):=\eig(\varphi^*,H^*,1)\) in terms of $r$ defined as in the table.}
    \centering
    \begin{tabular}{ |c|c|c|c| }
 \hline
  & \(\ord(\varphi)\) & f(r)& r\\
  \hline
 a) & 2&\( \frac{2}{15}r^5 - \frac{20}{3}r^4 + \frac{422}{3}r^3 - \frac{4735}{3}r^2 + \frac{47381}{5}r + 64202.\)&\(r=\eig(\varphi^*,H^2(X,\bbQ),1)\)\\
 
 b) & 2&\(\frac{2}{15}r^5 - 8r^4 + \frac{598}{3}r^3 -2543r^2 + \frac{247523}{15}r + 7202 \)&\(r=\eig(\varphi^*,H^2(X,\bbQ),1)\)\\
 
 c) & 3& \( \frac{27}{20}r^5 - \frac{27}{2}r^4 + \frac{117}{4}r^3 + \frac{261}{2}r^2 - \frac{3438}{5}r + 59832\) &\(2r=\eig(\varphi^*,H^2(X,\bbQ),1)\)\\
 \hline
 \end{tabular}
    \label{table_pols}
\end{table}

\begin{proof}(Corollary~\ref{cor:no-Enriques})
Assume $X$ is a hyper-Kähler manifold of OG10 type, and $X\to Y$ an \'etale morphism given by taking the quotient by a finite group $G$ realizing $Y$ as an Enriques manifold.
    As observed in \cite{BN-WS}, if $d$ denotes the order of $G$, one has:
    \begin{align*}
        d \cdot \chi(Y,\mathcal O_Y) = \chi(X,\mathcal O_X) = \dim X/2 + 1 = 6.
    \end{align*}
    In order to prove the statement it suffices to consider groups $G=\langle \varphi\rangle$ of order 2 and 3 acting on $X$.
    
    If $\ord(\varphi)=2$, then 
    \begin{align*}
    176904 = \dim \mH^*(X,\bbQ) = e(X) = 2\cdot e(Y) = 2 \cdot \eig(\varphi^*, H^*(X), 1).
    \end{align*}
    The possible values of the right hand side are explicitly computed in Theorem~\ref{MainThm} (see Table \ref{table_pols}) in terms of $r:=\eig(\varphi^*, H^2(X), 1)$. One verifies that there does not exist an integer $0\leq r\leq 24=\dim H^2(X,\Q)$, that satisfies such an equality. This shows that no hyper-Kähler manifolds of OG10 type have \(2:1\) \'etale quotients.

 Reasoning in a similar way for an order $3$ automorphism $\varphi$, one gets 
 \[ 
 176904 = \dim \mH^*(X,\bbQ) = e(X) = 3 \cdot \dim \mH^*(Y, \mathbb Q) = 3 \cdot \eig(\varphi^*, H^*(X), 1).
 \]
The possible values of the latter are computed in terms of \(2r:=\eig(\varphi^*, H^2(X), 1)\) in Theorem~\ref{MainThm} (see Table \ref{table_pols}).
 As before, one gets no integer solution. This shows that no hyper-Kähler manifolds of OG10 type have \(3:1\) \'etale quotients.
\end{proof}

\begin{remark}
The decomposition \eqref{tot-LLV-dec} holds in slightly more general settings, as shown in \cite[Theorem 3.26]{GKLR}. Consequently, Corollary \ref{cor:no-Enriques} could be restated as follows:

There does not exist an Enriques manifold whose universal covering space is a hyper-Kähler tenfold \(X\) satisfying the following properties:
\begin{align*}
    &b_2(X) = 24, \\
    &e(X) = 176904, \\
    &H^{\mathrm{odd}}(X, \mathbb{Q}) = 0 \quad \text{(i.e., \(X\) has no cohomology in odd degree)}.
\end{align*}
\end{remark}
\bibliography{references}
\bibliographystyle{alpha}
\end{document}